\newtheorem{thm}{Theorem}
\newtheorem{lem}{Lemma}
\newtheorem{rem}{Remark}
\newcommand{\e}{\mathbb{E}}
\newcommand{\la}{\langle}
\newcommand{\ra}{\rangle}
\DeclareMathOperator{\Var}{Var}
\begin{document}
	
    \title{Order of Fluctuations of the Free Energy in \\
    	the SK Model at Critical Temperature}
	\author{Wei-Kuo Chen \thanks{School of Mathematics, University of Minnesota. Email: wkchen@umn.edu} \and Wai-Kit Lam \thanks{School of Mathematics, University of Minnesota. Email: wlam@umn.edu}}
	
	\date{}
	
	\maketitle
	
	\begin{abstract}	
    We present an elementary approach to the order of fluctuations for the free energy in the Sherrington-Kirkpatrick mean field spin glass model at and near the critical temperature. It is proved that at the critical temperature the variance of the free energy is of $O((\log N)^2).$ In addition, we show that if one approaches the critical temperature from the low temperature regime at the rate $O(N^{-\alpha})$ for some $\alpha>0,$ then the variance is of  $O((\log N)^2+N^{1-\alpha}).$
	\end{abstract}

	
	\section{Introduction}
	
	The Sherrington-Kirkpatrick (SK) model was initially introduced in 1975 \cite{SK} in order to explain some strange magnetic properties of certain alloys. Over the past decades, it has received a great attention in the physics and mathematics communities. See \cite{MPV} for physics treatments and \cite{Pan,Talagrand,TalagrandBook1,TalagrandBook} for recent mathematical progress. 
	
	The aim of this short note is to study the order of fluctuations of the free energy in the SK model at the critical temperature. For any $N\geq 1,$ the Hamiltonian of the SK model is defined as 
	\[
	H_N(\sigma) = \frac{1}{\sqrt{N}} \sum_{1\leq i, j\leq N} g_{ij}\sigma_i\sigma_j
	\]
	for any $\sigma\in \Sigma_N:=\{-1,+1\}^N,$
	where $(g_{ij})_{1\leq i, j\leq N}$ is a family of independent standard Gaussian random variables. The covariance of $H_N$ is described by 
	$$
	\e H_N(\sigma^1)H_N(\sigma^2)=NR(\sigma^1,\sigma^2)^2,
	$$
	where $R(\sigma^1,\sigma^2):=N^{-1}\sum_{i=1}^N\sigma_i^1\sigma_i^2$ is called the overlap between spin configurations $\sigma^1,\sigma^2\in \Sigma_N.$ 
	The free energy and the Gibbs measure at (inverse) temperature $\beta>0$ is defined as
	\begin{align*}
	F_N(\beta) = \log \sum_{\sigma\in \Sigma_N} \exp\bigl(\beta H_N(\sigma)\bigr)
	\end{align*}
	and
	\begin{align*}
	G_N(\sigma)=\frac{\exp\bigl(\beta H_N(\sigma)\bigr)}{Z_N(\beta)},\,\,\forall \sigma\in \Sigma_N,
	\end{align*}
	where $Z_N(\beta):=\sum_{\sigma\in \Sigma_N} \exp\bigl(\beta H_N(\sigma)\bigr).$
	Denote by $\sigma^1,\sigma^2$ i.i.d. samplings from $G_N$ and by $\la \cdot\ra$ the expectation with respect to these random variables. 
	
	The SK model is known to exhibited a phase transition at the critical temperature $\beta_c:=1/\sqrt{2}$. In the high temperature regime $\beta<\beta_c,$ the limiting free energy is equal to the anneal free energy, that is, $$
	\lim_{N\to\infty}\frac{F_N(\beta)}{N}=\lim_{N\to\infty}\frac{\log \e Z_N }{N}
	$$
	and if we sample two independent $\sigma^1,\sigma^2$ from the Gibbs measure, then they are essentially orthogonal to each other in the sense that $\lim_{N\to\infty}\e\la R(\sigma^1,\sigma^2)^2\ra=0.$ In contrast, the model exhibits different behaviors in the lower temperature regime $\beta>\beta_c,$ where we see that
	$$
	\lim_{N\to\infty}\frac{F_N(\beta)}{N}<\lim_{N\to\infty}\frac{\log \e Z_N }{N}
	$$
	and the two independent samplings $\sigma^1,\sigma^2$ have nonzero overlap, that is, $\lim_{N\to\infty}\e\la R(\sigma^1,\sigma^2)^2\ra>0.$ Indeed, it was conjectured that the limiting distribution of $R(\sigma^1,\sigma^2)$ should be be described by a probability measure supported on an interval $[0,q]$ for some $q\in (0,1).$ See \cite{MPV,Talagrand,TalagrandBook} for more details.
	
	The order of fluctuations of the free energy is summarized as follows:
	
	\begin{itemize}
	
	\item {\bf High temperature}: Aizenman, Lebowitz, and Ruelle \cite{AizemanHighTemp} proved that the free energy has Gaussian fluctuations in the high temperature regime. Their result implies that the limit of $\mbox{Var}(F_N(\beta))$ exists and is finite. 
	
    \item {\bf Near critical temperature}: The problem of understanding the transition near the critical temperature was intensively studied in Talagrand's books \cite[Section 2.14]{Talagrand} and \cite[Section 11.7]{TalagrandBook}, where he showed that when $\beta=\beta_N$ approaches the critical temperature from the high temperature regime in the rate $\lim_{N\to\infty}N^{1/3}(\beta_c^2-\beta_N^2)=c$, then the overlap undergoes a phase transition depending on whether $c$ is finite or infinite. Exactly at the criticality $\beta_c,$ he also proved that the overlap is controlled by $\e \la R(\sigma^1,\sigma^2)^2\ra\leq C/\sqrt{N}$ for some universal constant $C$, from which it can be derived that $\mbox{Var}(F_N(\beta_c))\leq C\sqrt{N}$, see Remark \ref{rmk1} below. 
    
    \item {\bf Low temperature}: Chatterjee \cite{Chatterjee1} showed that $\mbox{Var}(F_N(\beta))\leq C(\beta)N/\log N$, where $C(\beta)$ is a constant independent of $N.$ Incidentally, it was proved by Chatterjee \cite[Theorem~2.4]{ChatterjeeFluctuations} that fluctuations of the free energy are at least of order $1$ at any $\beta$. 
     
     \end{itemize}
 
     Even though the SK model was proposed more than 40 years ago, understanding the exact order of fluctuations of its free energy at the critical temperature as well as in the lower temperature regime remains an open and very challenging question in spin glasses.
      
     This note is focused on the order of fluctuation of $F_N$ at the critical temperature. This case was studied before in the physics literature \cite{ASP,PR}, where it was expected that
     $$
     \mbox{Var}(F_N(\beta_c))=\frac{1}{6}\log N+O(1).
     $$
  	 Our main result below contains two parts. First, at the critical temperature, we obtain an upper bound of order $(\log N)^2$ for the variance of the free energy. Second, we show that if one approaches the critical temperature from the low temperature regime in the rate $\beta_N=\sqrt{\beta_c^2+N^{-\alpha}}$ for $\alpha\in (0,1)$, then a polynomial bound $N^{1-\alpha}$ is obtained. This improves Chatterjee's bound $N/\log N$ in the near critical case.

	\begin{thm}
		\label{thm: sk}
		The following statements hold:
		\begin{enumerate}
			\item There exists a constant $C>0$ such that
			$$
			\Var\bigl(F_N\bigl(\beta_c\bigr)\bigr)\leq C\bigl((\log N)^2+1\bigr),\,\,\forall N\geq 1.
			$$
			\item For any fixed $\alpha>0$ and $d>0,$ there exists a constant $C$ depending only on $\alpha$ and $d$ such that
			$$
			\Var\left(F_N\left(\sqrt{\beta_c^2+dN^{-\alpha}}\right)\right)\leq C\bigl((\log N)^2+N^{1-\alpha}\bigr),\,\,\forall N\geq 1.
			$$
		\end{enumerate}
	\end{thm}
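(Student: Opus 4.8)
The plan is to turn the variance into an integral of a two‑replica overlap via Gaussian interpolation, and then to exploit the decorrelation of the interpolated disorders together with Talagrand's bound $\e\la R(\sigma^1,\sigma^2)^2\ra\le C/\sqrt N$ at $\beta_c$. Here is part~(1) in detail; part~(2) reuses the same machinery. Let $(g'_{ij})$ be an independent copy of $(g_{ij})$ and, for $t\in[0,1]$, set $g(t)=\sqrt{1-t}\,g+\sqrt t\,g'$; write $F_N=F_N(\beta;g)$. From $\varphi(t):=\e[F_N(\beta;g)F_N(\beta;g(t))]$ one has $\varphi(0)=\e[F_N^2]$ and $\varphi(1)=(\e F_N)^2$, so $\Var(F_N)=-\int_0^1\varphi'(t)\,dt$. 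Differentiating and integrating by parts in the Gaussian vector $(g,g')$ — the second‑derivative terms cancel, exactly as in the standard computation — and using $\partial_{g_{ij}}F_N=\beta N^{-1/2}\la\sigma_i\sigma_j\ra$, one obtains
\[
\Var\bigl(F_N(\beta)\bigr)=\beta^2 N\int_0^1\frac{1}{2\sqrt{1-t}}\,\e\bigl\la R(\sigma^1,\sigma^2)^2\bigr\ra_t\,dt,
\]
where under $\la\cdot\ra_t$ the replica $\sigma^1$ is drawn from the Gibbs measure with couplings $g$ and $\sigma^2$, independently, from the Gibbs measure with couplings $g(t)$. The one‑site spin‑flip symmetry (reverse $\sigma_1\mapsto-\sigma_1$ together with all couplings incident to site $1$) gives $\e\la\sigma_1\sigma_2\ra=0$, hence, with $X:=\la\sigma_1\sigma_2\ra$, $\e\la R_{12}^2\ra_t=\tfrac1N+\tfrac{N-1}{N}\e[X(g)X(g(t))]$, and $t\mapsto\e\la R_{12}^2\ra_t$ is nonincreasing. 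In terms of the Wiener chaos decomposition $X=\sum_{k\ge1}X_k$, $c_k=\|X_k\|_2^2$, this reads $\Var(F_N(\beta))=\beta^2\bigl(1+(N-1)\sum_{k\ge1}\tfrac{c_k}{k+1}\bigr)$ with $\sum_k c_k=\e[X^2]\le C/\sqrt N$; bounding $\sum_k c_k/(k+1)$ by $\sum_k c_k$ merely reproduces the $O(\sqrt N)$ estimate, so the point is to show the low‑order chaoses of $X$ carry little mass.

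Fix $\tau_N\asymp(\log N)^2/\sqrt N$. On $[0,\tau_N]$ bound $\e\la R_{12}^2\ra_t\le\e\la R_{12}^2\ra\le C/\sqrt N$ by monotonicity; this contributes $\lesssim\beta^2 N\cdot N^{-1/2}\cdot\tau_N\lesssim(\log N)^2$. The $1/N$ part of $\e\la R_{12}^2\ra_t$ on $[\tau_N,1]$ contributes at most $\beta^2$. It remains to control $\beta^2(N-1)\int_{\tau_N}^1(2\sqrt{1-t})^{-1}\e[X(g)X(g(t))]\,dt$, for which it suffices to prove the decorrelation estimate $\e[X(g)X(g(t))]\le C(\log N)^2/N$ for $t\ge\tau_N$. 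Since $\e[X(g)X(g(t))]=\sum_k(1-t)^{k/2}c_k$ and $(1-\tau_N)^{k/2}\le e^{-\tau_N k/2}$, while $\sum_k c_k\le C/\sqrt N$, the tail $\sum_{k>\sqrt N/\log N}(1-\tau_N)^{k/2}c_k$ is of order $1/N$; the estimate therefore reduces to showing that $\la\sigma_1\sigma_2\ra$ puts $O((\log N)^2/N)$ of its $L^2$‑mass in Wiener chaoses of order below $\sqrt N/\log N$:
\[
\sum_{k\,\le\,\sqrt N/\log N} c_k\ \le\ \frac{C(\log N)^2}{N}.
\]

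This displayed low‑chaos bound is the heart of the matter and the step I expect to be genuinely difficult, since it must use something beyond Poincaré and the $C/\sqrt N$ overlap bound. The natural strategy is to iterate the interpolation: a second differentiation of $\e[X(g)X(g(t))]$ produces $\sum_{ij}\e[\partial_{ij}X(g)\,\partial_{ij}X(g(t))]$ with $\partial_{ij}X=\beta N^{-1/2}\bigl(\la\sigma_1\sigma_2\sigma_i\sigma_j\ra-\la\sigma_1\sigma_2\ra\la\sigma_i\sigma_j\ra\bigr)$, reducing the problem to controlling sums of connected multi‑spin correlation functions of the SK model at $\beta_c$ — via the cavity method, or via a self‑improving differential inequality for $t\mapsto\e[X(g)X(g(t))]$ that drives its value for $t\ge\tau_N$ down to the required order. (The physics prediction $\Var(F_N(\beta_c))\sim\tfrac16\log N$ indicates that the mass of $\la\sigma_1\sigma_2\ra$ actually concentrates near chaos level $\sqrt N$, so the genuine low‑chaos contribution is even smaller; the extra $\log N$ in the final bound is the price of a soft argument.)

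At $\beta=\beta_N:=\sqrt{\beta_c^2+dN^{-\alpha}}$ the same identity holds. If $\alpha\le1/2$ one needs only the Poincaré bound $\Var(F_N(\beta_N))\le\beta_N^2 N\,\e\la R_{12}^2\ra_{\beta_N}$ together with a near‑critical overlap estimate $\e\la R_{12}^2\ra_{\beta_N}\lesssim N^{-\alpha}$, obtained from Talagrand's $C/\sqrt N$ bound at $\beta_c$ and a comparison over $[\beta_c,\beta_N]$ (the relevant $\beta$‑derivatives being bounded Gibbs averages); this yields $\Var(F_N(\beta_N))\lesssim N^{1-\alpha}$, already beating Chatterjee's $N/\log N$. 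If $\alpha>1/2$ then $\beta_N$ lies well inside the critical window $|\beta^2-\beta_c^2|\lesssim N^{-1/3}$, so $\e\la R_{12}^2\ra_{\beta_N}\lesssim N^{-1/2}$ and the decorrelation estimate holds uniformly near $\beta_c$; repeating the argument of part~(1) at $\beta_N$ gives $\Var(F_N(\beta_N))\lesssim(\log N)^2$. In both cases $\Var(F_N(\beta_N))\lesssim(\log N)^2+N^{1-\alpha}$.
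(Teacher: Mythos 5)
Your reduction of the variance to an interpolated overlap integral is sound and is in the same spirit as the identity the paper uses (Chatterjee's formula \eqref{eq2}), but the proof has a genuine gap at exactly the point you flag yourself: the low-chaos estimate $\sum_{k\le \sqrt N/\log N} c_k \le C(\log N)^2/N$ for $X=\la\sigma_1\sigma_2\ra$ is never proved. Everything after the reduction ("iterate the interpolation", "cavity method", "self-improving differential inequality") is a list of possible directions, not an argument, so the main theorem is not established. Note also that your route leans on Talagrand's critical bound $\e\la R(\sigma^1,\sigma^2)^2\ra\le C/\sqrt N$, which by itself only reproduces the $O(\sqrt N)$ variance bound (this is precisely the content of Remark~\ref{rmk1} in the paper); the missing decorrelation input is the entire difficulty, and the paper avoids it altogether. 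Its key Lemma bounds the coupled overlap $\e\la R(\sigma,\rho)^2\ra_{t,0}$ directly by $\frac{2}{N(1-2\beta^2 t)}\log\frac{2}{1-2\beta^2 t}$ for $2\beta^2 t<1$, via a Guerra--Toninelli quadratic replica coupling: one adds a term $\lambda\beta^2 N R(\sigma,\rho)^2$, shows $\phi_N(t,\lambda)\le\phi_N(0,\lambda+t)$ by differentiating $\Phi_N(t,\lambda)=\phi_N(t,\lambda-t)$, and then uses convexity in $\lambda$ together with the subgaussian bound for the decoupled overlap; integrating this bound over $t\in[0,1/(2\beta^2)-\delta]$ with $\delta=1/N$ (resp.\ $\delta=dN^{-\alpha}$) gives both parts with no information about the Gibbs measure at criticality.

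Part (2) of your proposal has an additional flaw that would not survive scrutiny even granting the rest. The comparison of $\e\la R_{12}^2\ra$ over $[\beta_c,\beta_N]$ is justified by "the relevant $\beta$-derivatives being bounded Gibbs averages", but they are not: differentiating a quenched Gibbs average in $\beta$ brings down the Hamiltonian, and after Gaussian integration by parts one gets terms of size $\beta N$ times overlap moments, so the natural Gronwall bound carries a factor $e^{cN(\beta_N-\beta_c)}\sim e^{cN^{1-\alpha}}$, which is useless for $\alpha<1$. Likewise the claim $\e\la R_{12}^2\ra_{\beta_N}\lesssim N^{-1/2}$ for $\alpha>1/2$ invokes a near-critical overlap bound from the low-temperature side that is not available off the shelf (Talagrand's critical-window results concern the approach from high temperature). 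So as written, part (2) rests on two unproven estimates in addition to the low-chaos bound needed for part (1).
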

         
    Our approach is motivated by a work of Guerra and Toninelli \cite{GuerraToninelli}, where they derived the limit of the SK free energy and provided a rate of convergence. Their idea was to consider a coupled free energy with Hamiltonian of the form $\sqrt{s}(H_N(\sigma^1)+H_N(\sigma^2))+\lambda NR(\sigma^1,\sigma^2)^2$, where $\sqrt{s}$ is a varying temperature and $\lambda$ is an auxiliary parameter. From this, they derived an ordinary differential inequality for this coupled free energy in the variable $s\geq 0$ and by solving this inequality, they obtained the rate of convergence of the free energy. Our argument adopts a different route by considering a coupled Hamiltonian that is related to the problem of chaos in disorder in the SK model considered in \cite{Chatterjee1,ChatterjeeSuperConc}.

	\section{Proof of Theorem~\ref{thm: sk}}
	
		Let $H_N',H_N''$ be two independent copies of $H_N.$ For $0\leq t\leq 1$ and $\sigma, \rho\in \Sigma_N$, set
	\begin{align*}
	H_{N,t}^1(\sigma)&=\sqrt{t}H_N(\sigma)+\sqrt{1-t}H_N'(\sigma),\\
	H_{N,t}^2(\rho)&=\sqrt{t}H_N(\rho)+\sqrt{1-t}H_N''(\rho).
	\end{align*}
	Note that $$\e H_{N,t}^1(\sigma)H_{N,t}^2(\rho)=tNR(\sigma,\rho)^2.$$
	For any $t\in [0,1]$ and $\lambda\in \mathbb{R}$, define
	\begin{align*}
	\phi_N(t,\lambda)&=\frac{1}{N}\e\log \sum_{\sigma,\rho\in \Sigma_N}\exp\bigl(\beta(H_{N,t}^1(\sigma)+H_{N,t}^2(\rho))+\lambda \beta^2NR(\sigma,\rho)^2\bigr).
	\end{align*}
	Denote by $\la \cdot\ra_{t,\lambda}$ the Gibbs expectation associated to this free energy. That is, the expectation with respect to the measure
	\[
	\frac{\sum_{\sigma,\rho}I((\sigma, \rho)\in \cdot) \exp\bigl(\beta(H_{N,t}^1(\sigma)+H_{N,t}^2(\rho))+\lambda \beta^2NR(\sigma,\rho)^2\bigr)}{\sum_{\sigma,\rho}\exp\bigl(\beta(H_{N,t}^1(\sigma)+H_{N,t}^2(\rho))+\lambda \beta^2NR(\sigma,\rho)^2\bigr)}.
	\]
	
	\begin{lem}
		For any $\beta>0$ and $t\in [0,1]$  satisfying $2\beta^2t<1,$
		we have that for any $N\geq 1,$
			\begin{align*}
		\e \la R(\sigma,\rho)^2\ra_{t,0}&\leq \frac{2}{N(1-2\beta^2t)}\log \frac{2}{1-2\beta^2 t}.
		\end{align*}
	\end{lem}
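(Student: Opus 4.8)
The plan is to exploit the elementary identity relating the $\lambda$-derivative of $\phi_N(t,\lambda)$ at $\lambda=0$ to the quantity $\e\la R(\sigma,\rho)^2\ra_{t,0}$, together with the $t$-derivative computed via Gaussian integration by parts. First I would note that
\[
\frac{\partial \phi_N}{\partial \lambda}(t,0)=\beta^2\,\e\la R(\sigma,\rho)^2\ra_{t,0},
\]
so it suffices to bound $\partial_\lambda\phi_N(t,0)$ from above. The key structural fact is that the coupling term $\lambda\beta^2 N R(\sigma,\rho)^2$ is exactly what is produced by the interpolation: differentiating $\phi_N$ in $t$ and applying Gaussian integration by parts to the $H_N,H_N',H_N''$ fields, the self-overlap terms ($R(\sigma,\sigma)=R(\rho,\rho)=1$) contribute a deterministic constant, the cross term $\e H_{N,t}^1(\sigma)H_{N,t}^2(\rho)=tNR(\sigma,\rho)^2$ contributes $+\beta^2\e\la R(\sigma,\rho)^2\ra_{t,\lambda}$ with a favorable sign, and the within-replica terms ($R(\sigma,\sigma')^2$ etc.) also appear. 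The upshot should be an inequality of the form $\partial_t\phi_N(t,\lambda)\ge \beta^2\e\la R(\sigma,\rho)^2\ra_{t,\lambda}$ up to controllable terms, or more directly a comparison showing $\phi_N$ is convex/monotone enough that $\partial_\lambda\phi_N(t,0)$ is controlled by $\phi_N(t,0)-\phi_N(0,0)$ or similar.

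The cleaner route, which I would pursue, is to bound $\e\la R(\sigma,\rho)^2\ra_{t,0}$ directly by a second-moment / convexity argument on the $\lambda$ variable. Since $\phi_N(t,\cdot)$ is convex in $\lambda$ (it is $\frac1N\e\log$ of a sum of log-convex functions of $\lambda$), we have $\partial_\lambda\phi_N(t,0)\le \phi_N(t,\lambda)-\phi_N(t,0)$ divided by $\lambda$ for any $\lambda>0$, hence $\beta^2\e\la R(\sigma,\rho)^2\ra_{t,0}\le \lambda^{-1}\bigl(\phi_N(t,\lambda)-\phi_N(t,0)\bigr)$. It then remains to bound $\phi_N(t,\lambda)-\phi_N(t,0)$ from above. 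Here I would use the trivial bound obtained by replacing $e^{\lambda\beta^2 N R^2}$ by its maximum, but that is too lossy; instead, the right move is to absorb the coupling into the Gaussian: by a change of the interpolation or by the Gaussian comparison (Guerra-type) interpolation in $t$, one shows $\phi_N(t,\lambda)\le \phi_N(0,\lambda)+$ error, and at $t=0$ the two replicas decouple so $\phi_N(0,\lambda)=\frac2N\e\log Z_N(\beta)+\frac1N\log\bigl(\text{contribution of }\lambda\text{ term under product measure}\bigr)$, which can be estimated because under the independent product of two copies of the $\beta=0$-type measure the overlap $R(\sigma,\rho)$ concentrates at $0$ at Gaussian scale $N^{-1/2}$.

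Concretely, the mechanism behind the stated bound $\frac{2}{N(1-2\beta^2 t)}\log\frac{2}{1-2\beta^2 t}$ is almost certainly: optimize over $\lambda$ in the inequality $\beta^2\e\la R(\sigma,\rho)^2\ra_{t,0}\le \lambda^{-1}\bigl(\phi_N(t,\lambda)-\phi_N(t,0)\bigr)$ after establishing $\phi_N(t,\lambda)-\phi_N(t,0)\le -\frac1N\log\bigl(1-2\lambda\beta^2 t\cdot(\text{something})\bigr)$, valid for $2\lambda\beta^2 t<1$; the $\log\frac{2}{1-2\beta^2 t}$ shape comes from choosing $\lambda$ of order $1/\beta^2$ near the boundary of the allowed region and from the Gaussian/binomial computation $\e\, e^{u N R(\sigma,\rho)^2}=\E\, e^{u S_N^2/N}$ for $S_N$ a sum of $N$ independent signs, whose moment generating function is $(\cosh(\cdot))^N$-type and yields a $(1-2u)^{-1/2}$-type bound after a Gaussian (Hubbard–Stratonovich) linearization of the square. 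The main obstacle I anticipate is getting the constant and the precise allowed range of $t$ right: one must track carefully that the factor multiplying $\lambda$ inside the logarithm is exactly $2\beta^2 t$ (and not, say, $2\beta^2 t$ times an extra overlap-dependent factor that would need its own bound), which requires that the $t$-interpolation contributes \emph{only} the cross term with the correct sign and that the within-replica self-interaction is handled by the Hubbard–Stratonovich step rather than left as an uncontrolled remainder. Once that bookkeeping is done, the optimization over $\lambda$ is a one-line calculus exercise producing the claimed expression.
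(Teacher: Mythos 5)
Your overall skeleton is the right one and matches the paper in outline: convexity of $\phi_N(t,\cdot)$ in $\lambda$ gives $\beta^2\lambda\,\e\la R(\sigma,\rho)^2\ra_{t,0}\le \phi_N(t,\lambda)-\phi_N(t,0)$, the resulting free energy difference is to be compared with the decoupled $t=0$ system, where (after Jensen's inequality in the disorder and the symmetry fact that the annealed law of $(\sigma,\rho)$ under $\e\la\cdot\ra_{0,0}$ is uniform, so $NR(\sigma,\rho)$ is a sum of i.i.d.\ signs) the bound $\E\exp\bigl(xS_N^2/N\bigr)\le (1-2x)^{-1/2}$ finishes the computation, followed by an optimization in $\lambda$. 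However, the step you explicitly leave open --- and flag as the ``main obstacle'' --- is exactly the heart of the proof, and the form you guess for it is not correct. You propose either $\phi_N(t,\lambda)\le\phi_N(0,\lambda)+\mbox{error}$ via a Guerra-type interpolation, or $\phi_N(t,\lambda)-\phi_N(t,0)\le -\frac1N\log\bigl(1-2\lambda\beta^2t\cdot(\cdots)\bigr)$ valid for $2\lambda\beta^2t<1$. Neither is what the argument uses, and the first is not even plausibly available: Gaussian integration by parts gives $\partial_t\phi_N(t,\lambda)=\beta^2\bigl(\e\la R(\sigma,\rho)^2\ra_{t,\lambda}-\e\la R(\sigma^1,\rho^2)^2\ra_{t,\lambda}\bigr)$, which has no favorable sign for $\lambda>0$ (the same-pair term is the one the coupling boosts), and you never say what controls the ``error.''

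The paper's device, which is the missing idea, is a shift of $\lambda$ along the interpolation: set $\Phi_N(t,\lambda)=\phi_N(t,\lambda-t)$. Since $\partial_\lambda\phi_N(t,\lambda)=\beta^2\e\la R(\sigma,\rho)^2\ra_{t,\lambda}$, this derivative exactly cancels the same-pair term in $\partial_t\phi_N$, leaving $\partial_t\Phi_N(t,\lambda)=-\beta^2\e\la R(\sigma^1,\rho^2)^2\ra_{t,\lambda-t}\le 0$ and hence the clean, error-free comparison $\phi_N(t,\lambda)\le\phi_N(0,\lambda+t)$. This shift is also where the precise constraint and constants come from: the Rademacher bound is applied at parameter $\beta^2(\lambda+t)$, so the admissible range is $2\beta^2(\lambda+t)<1$ rather than your $2\lambda\beta^2t<1$, and the choice $\lambda=\frac12\bigl(\frac{1}{2\beta^2}-t\bigr)$ produces exactly $\frac{2}{N(1-2\beta^2t)}\log\frac{2}{1-2\beta^2t}$. (A smaller inaccuracy: your decomposition of $\phi_N(0,\lambda)$ as $\frac2N\e\log Z_N$ plus a separate $\lambda$-contribution is not exact, since the $\lambda$ term couples the replicas; the correct identity is $\phi_N(0,u)-\phi_N(0,0)=\frac1N\e\log\la\exp(\beta^2uNR(\sigma,\rho)^2)\ra_{0,0}$, to which one applies Jensen.) Without the shift, or an equivalent observation that $\partial_t\phi_N\le\partial_\lambda\phi_N$, the comparison your proof relies on is unproved, so the proposal has a genuine gap at its central step.
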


\begin{proof}
	Note that for $t\in (0,1)$ and any $\sigma, \sigma', \rho, \rho'\in \Sigma_N$,
	\begin{align*}
	\e \Bigl(\frac{H_N(\sigma)}{\sqrt{t}}-\frac{H_N'(\sigma)}{\sqrt{1-t}}\Bigr)H_{N,t}^1(\sigma')&=0,\\
	\e \Bigl(\frac{H_N(\rho)}{\sqrt{t}}-\frac{H_N''(\rho)}{\sqrt{1-t}}\Bigr)H_{N,t}^2(\rho')&=0,\\
	\e \Bigl(\frac{H_N(\sigma)}{\sqrt{t}}-\frac{H_N'(\sigma)}{\sqrt{1-t}}\Bigr)H_{N,t}^2(\rho')&=NR(\sigma,\rho')^2,\\
	\e \Bigl(\frac{H_N(\rho)}{\sqrt{t}}-\frac{H_N'(\rho)}{\sqrt{1-t}}\Bigr)H_{N,t}^1(\sigma')&=NR(\sigma',\rho)^2.
	\end{align*}
	Using Gaussian integration by parts gives
	\begin{align*}
	\partial_t\phi_N(t,\lambda)&=\frac{1}{2N}\beta\e \Bigl\la\Bigl(\frac{H_N(\sigma)}{\sqrt{t}}-\frac{H_N'(\sigma)}{\sqrt{1-t}}\Bigr)+\Bigl(\frac{H_N(\rho)}{\sqrt{t}}-\frac{H_N''(\rho)}{\sqrt{1-t}}\Bigr)\Bigr\ra_{t,\lambda}\\
	&=\frac{\beta^2}{2}\bigl(2\e \la R(\sigma,\rho)^2\ra_{t,\lambda}-\e\la R(\sigma^1,\rho^2)^2\ra_{t,\lambda}-\e\la R(\sigma^2,\rho^1)^2\ra_{t,\lambda}\bigr)\\
	&=\beta^2\bigl(\e \la R(\sigma,\rho)^2\ra_{t,\lambda}-\e \la R(\sigma^1,\rho^2)^2\ra_{t,\lambda}\bigr),
	\end{align*}
	where the pairs $(\sigma,\rho),$ $(\sigma^1,\rho^1),$ and $(\sigma^2,\rho^2)$ are i.i.d. copies from the Gibbs measure $\la\cdot\ra_{\lambda,t}$ and the second equality used symmetry between the distributions corresponding to the pairs $(\sigma^1,\rho^2)$ and $(\sigma^2,\rho^1).$
	Set
	\begin{align*}
	\Phi_N(t,\lambda)&=\phi_N(t,\lambda-t).
	\end{align*}
	Then from the above equation,
	\begin{align*}
	\partial_t\Phi_N(t,\lambda)&=\partial_t\phi_N(t,\lambda-t)-\partial_\lambda\phi_N(t,\lambda-t)\\
	&=\beta^2\bigl(\e \la R(\sigma,\rho)^2\ra_{t,\lambda-t}-\e \la R(\sigma^1,\rho^2)^2\ra_{t,\lambda-t}\bigr)-\beta^2\e \la R(\sigma,\rho)^2\ra_{t,\lambda-t}\\
	&=-\beta^2\e \la R(\sigma^1,\rho^2)^2\ra_{t,\lambda-t}.
	\end{align*}
	Note that $\partial_t\Phi_N(s,\lambda+t)=-\beta^2\e \la R(\sigma^1,\rho^2)^2\ra_{s,\lambda+t-s}.$
	It follows that
	\begin{align*}
	\phi_N(t,\lambda)&=\Phi_N(t,\lambda+t)\\
	&=\int_0^t\partial_{t}\Phi_N(s,\lambda+t)ds+\Phi_N(0,\lambda+t)\\
	&\leq \Phi_N(0,\lambda+t)\\
	&=\phi_N(0,\lambda+t).
	\end{align*}
	Now using the convexity of $\phi_N$ in $\lambda$ gives
	\begin{align}
	\begin{split}\label{eq1}
	\beta^2\lambda\e \la R(\sigma,\rho)^2\ra_{t,0}&=\lambda \partial_\lambda\phi_N(t,0)\\
	&\leq \phi_N(t,\lambda)-\phi_N(t,0)\\
	&\leq\phi_N(0,\lambda+t)-\phi_N(t,0)\\
	&=\phi_N(0,\lambda+t)-\phi_N(0,0),
	\end{split}
	\end{align}
	where the last equation used that $\phi_N(t,0)=\phi_N(0,0).$ Note that under the measure $\e \la \cdot\ra_{0,0}$, $\sigma$ and $\rho$ are independent uniform random variables on $\Sigma_N$ and hence, $NR(\sigma,\rho)$ is equal to the sum of $N$ i.i.d. Rademacher random variables $X_1,\ldots,X_N$ in distribution. It is well-known (see, e.g., \cite[Eq.~(A.19)]{TalagrandBook}) that 
	\begin{align*}
	\e \exp \Bigl[x\Bigl(\frac{X_1+\cdots+X_N}{\sqrt{N}}\Bigr)^2\Bigr]&\leq \frac{1}{\sqrt{1-2x}},\,\,\forall x\in [0,1/2).
	\end{align*}
	Consequently, using \eqref{eq1} and Jensen's inequality, we have
	\begin{align*}
		\beta^2\lambda\e \la R(\sigma,\rho)^2\ra_{t,0}&\leq\phi_N(0,\lambda+t)-\phi_N(0,0)\\
		&=\frac{1}{N}\e\log \Bigl\la \exp \bigl(\beta^2(\lambda+t)NR(\sigma,\rho)^2\bigr)\Bigr\ra_{0,0}\\
	&\leq \frac{1}{N}\log \e\Bigl\la \exp \bigl(\beta^2(\lambda+t)NR(\sigma,\rho)^2\bigr)\Bigr\ra_{0,0}\\
	&\leq \frac{1}{N}\log \frac{1}{\sqrt{1-2\beta^2(\lambda+t)}}
	\end{align*}
	whenever $2\beta^2(\lambda+t)<1.$ In particular, plugging $$\lambda=\frac{1}{2}\Bigl(\frac{1}{2\beta^2}-t\Bigr)$$
	into the above inequality completes our proof.
\end{proof}

\begin{proof}[Proof of Theorem~\ref{thm: sk}] Recall from \cite{Chatterjee1} that the variance of the free energy can be expressed as
	\begin{align}\label{eq2}
	\mbox{Var}(F_N(\beta))=\beta^2N\int_0^1\e \la R(\sigma,\rho)^2\ra_{t,0}dt.
	\end{align}
	For any $0<\delta<1/(2\beta^2)\leq 1,$ 
	\begin{align*}
	\int_0^{\frac{1}{2\beta^2}-\delta}\e \la R(\sigma,\rho)^2\ra_{t,0}dt&\leq \frac{1}{N}\int_0^{\frac{1}{2\beta^2}-\delta}\frac{2}{1-2\beta^2t}\log \frac{2}{1-2\beta^2 t}dt\\
	&=\frac{1}{2N\beta^2}\Bigl(\log \frac{1-2\beta^2 t}{2}\Bigr)^2\Big|_{0}^{\frac{1}{2\beta^2}-\delta}\\
	&=\frac{1}{2N\beta^2}\Bigl(\bigl(\log(\beta^2\delta)\bigr)^2-\bigl(\log 2\bigr)^2\Bigr)\\
		&\leq\frac{1}{2N\beta^2}\bigl(\log(\beta^2\delta)\bigr)^2\\
	&\leq \frac{1}{N\beta^2}\bigl( (\log \delta)^2+4(\log \beta)^2\bigr),
	\end{align*}
	where we have used the inequality $(\log(ab))^2\leq 2(\log a)^2+2(\log b)^2$ for any $a,b>0.$
	On the other hand, noting that $|R(\sigma,\rho)|\leq 1$ implies
	\begin{align*}
	\int_{\frac{1}{2\beta^2}-\delta}^1\e \la R(\sigma,\rho)^2\ra_{t,0}dt&\leq 1-\frac{1}{2\beta^2}+\delta.
	\end{align*}
	From this,
	\begin{align*}
	\mbox{Var}(F_N(\beta))&\leq \beta^2\Bigl(\frac{1}{\beta^2}\bigl( (\log \delta)^2+4(\log \beta)^2\bigr)+\Bigl(1- \frac{1}{2\beta^2}+\delta\Bigr)N\Bigr).
	\end{align*}
	If $\beta=\beta_c,$ we take $\delta=1/N$ so that 
		\begin{align*}
	\mbox{Var}(F_N(\beta))&\leq \bigl( (\log N)^2+4(\log 2)^2\bigr)+\frac{1}{2}
	\end{align*}
	and this gives the first assertion.	If $\beta^2=\beta_c^2+dN^{-\alpha}$ for $d>0$, we take $\delta=dN^{-\alpha}$ and note that
	$$
	1- \frac{1}{2\beta^2}+\delta=\frac{2dN^{-\alpha}}{1+2dN^{-\alpha}}+dN^{-\alpha}\leq 3dN^{-\alpha},
	$$
	which implies that as long as $N$ is large enough,
	\begin{align*}
\mbox{Var}(F_N(\beta))&\leq \beta^2\Bigl(\frac{1}{\beta^2}\bigl( (-\alpha\log N+\log d)^2+4(\log \beta)^2\bigr)+3dN^{1-\alpha}\Bigr)\\
&=(-\alpha\log N+\log d)^2+4(\log \beta)^2+3d\beta^2 N^{1-\alpha}
\end{align*}
and the second assertion follows.
\end{proof}

\begin{rem}\label{rmk1}\rm
	Consider the SK model at the critical temperature $\beta_c.$ Recall from Talagrand \cite[Chapter 11]{TalagrandBook} that there exists a constant $C>0$ such that $\e \la R(\sigma^1,\sigma^2)^2\ra\leq C/\sqrt{N}$ for all $N\geq 1.$ Also, it is known (see \cite{Chatterjee1}) that $t\in [0,1]\mapsto \e \la R(\sigma,\rho)^2\ra_{t,0}$ is a nondecreasing function with $\e \la R(\sigma,\rho)^2\ra_{1,0}=\e \la R(\sigma^1,\sigma^2)^2\ra.$ These imply that $\e \la R(\sigma,\rho)^2\ra_{t,0}\leq C/\sqrt{N}$ for all $t\in [0,1]$. Consequently, from \eqref{eq2}, $\mbox{Var}(F_N(\beta_c))\leq C\beta_c^2\sqrt{N}.$
\end{rem}

\smallskip

\noindent {\bf Acknowledgements.} Both authors thank Sourav Chatterjee for explaining Talagrand's upper bound for the variance of the free energy at the critical temperature and Erik Bates for the illuminating discussion and careful reading.
The first author's research is partially supported by NSF grants DMS-17-52184.

\end{document}